\newtheorem{theorem}{Theorem}[section]
\newtheorem{lemma}[theorem]{Lemma}
\newtheorem{cor}[theorem]{Corollary}
\newtheorem{prop}[theorem]{Proposition}
\theoremstyle{remark}
\newtheorem*{remark}{Remark}
\theoremstyle{example}
\newtheorem{example}[theorem]{Example}
\theoremstyle{definition}
\newtheorem{definition}[theorem]{Definition}
\title{Local equivalence problem for Levi flat hypersurfaces}
\date{\today}                                           
\author{Giuseppe Della Sala}
\address{Department of Mathematics\\ \newline University of Vienna\\ Vienna\\ 1090\\ Austria}
\email{giuseppe.dellasala@univie.ac.at, beppe.dellasala@gmail.com}
\subjclass[2000]{32V40}
\keywords{Levi flat hypersurface, equivalence problem}
\begin{document}

\begin{abstract}
In this paper we consider germs of smooth Levi flat hypersurfaces, under the following notion of local equivalence: $S_1\sim S_2$ if their one-sided neighborhoods admit a biholomorphism smooth up to the boundary. We introduce a simple invariant for this relation, which allows to prove some characterizations of triviality (i.e.\ equivalence to the hyperplane). Then, we employ the same invariant to construct infinitely many non-trivial classes, including an infinite family of not equivalent hypersurfaces which are almost everywhere analytic.
\end{abstract}

\maketitle

\section*{Introduction} The question of biholomorphic equivalence is an old and natural problem in the field of several complex variables. From a general point of view, it can be stated as follows: given two (real) submanifolds $M,M'\subset \mathbb C^n$, $0\in M,M'$, establish whether a local biholomorphism $\Phi$ (defined in a neighborhood of $0$) exists such that $\Phi(M)=M'$. The first one to consider this ``local'' problem was Poincar\'e \cite{Po}, who showed that generically, two real submanifolds  will not be biholomorphically equivalent to each other, and that the number of invariants is necessarily infinite.
In the last century the topic has been taken up by many authors, and for several classes of manifolds a complete solution has been obtained. Levi non-degenerate real hypersurfaces in $\mathbb{C}^n$ were classified by Cartan (see \cite{Ca1},\cite{Ca2}) for $n=2$, and independently by Tanaka \cite{Ta} and by
Chern and Moser (who gave a classification in terms of \emph{normal forms} in \cite{CM}) for $n\geq 2$. 
Later on, the question has been considered under hypotheses of increasingly higher degeneracy; for
example, Kol{\'a}{\v{r} \cite{Ko} has provided a normal form for hypersurfaces of finite type in $\mathbb C^2$---an exhaustive review of the subject would be far beyond the scope of this introduction.

It is worth noting that the equivalence problem has been treated--for very good reasons--almost exclusively for {\em real-analytic} $M$ and $M'$. 
Under this hypothesis, the case of \emph{maximal} degeneracy, i.e.\ that of Levi flat hypersurfaces (that is, real hypersurfaces foliated by 
complex hypersurfaces), becomes trivial: real analytic Levi flat
hypersurfaces are in fact locally biholomorphic to the hyperplane $\{ (z,w) \in \mathbb{C}^n\times \mathbb{C} \colon \rm{Im} w =0\} $  
(see e.g. \cite{AF}). We thus turn to the case of merely smooth Levi flat
hypersurfaces; we consider the following notion of equivalence, corresponding to the \lq\lq localized\rq\rq\ version of a biholomorphism, 
smooth up to the boundary,
between two domains of $\mathbb C^n$:

\begin{definition} \label{loceq} Let $\mathcal U\subset \mathbb C^n$ be a domain, and let $S_1, S_2$ be Levi flat hypersurfaces of class $C^k$ ($k\geq 1$)
embedded in $\mathcal U$, $0\in S_i$. We suppose that each $S_i$ divides $\mathcal U$ in two connected components, and we fix $U_i\subset \mathcal
U\setminus S_i$. We say that $(S_1,U_1)$ and $(S_2,U_2)$ are \emph{ $C^k$ locally equivalent} if there exist neighborhoods $\mathcal V_i$ of $0$ in $\mathbb
C^n$ such that, putting $V_i= \mathcal V_i\cap U_i$, there is a biholomorphism $\Phi: V_1\to V_2$ which admits an extension to a $C^k$-CR map from
$S_1\cap \mathcal V_1$ to  $S_2\cap \mathcal V_2$ (which we again
denote by $\Phi$).\end{definition} 

We note that the one-sided neighborhood $U_i$ is only important in order to define the {\em side} of $S_1$ (or $S_2$, respectively); if it has
been fixed, we can just talk about the equivalence of $(S_1,0)$ and $(S_2,0)$. Moreover, in this paper, 
we will mostly deal with the case $k=\infty$ and require that the local equivalence $\Phi$ is also of class $C^\infty$; 
in this case we will simply talk about local equivalence.

Our general aim is to study the behavior of germs of Levi flat hypersurfaces through $0\in \mathbb C^n$ with respect to this equivalence relation. In
particular, we introduce a basic invariant, the property of admitting a foliation by complex hypersurfaces on $\overline U$, holomorphic
on $U$ and ($C^k$) smooth up to $S$. This condition is fulfilled for all real-analytic Levi flat hypersurfaces, and we show that in the smooth case it characterizes the {\em trivial} germs, i.e.\ those which are equivalent to the hyperplane. In Section \ref{triv} we discuss some of the properties of this invariant, and
 we give some  conditions and criteria for the germ of $S$ to be trivial. The general idea that we follow is that it
should be possible to reduce the equivalence problem for Levi flat germs to a (possibly infinite) collection of extension problems in one complex variable.
Then, in Section \ref{autom} we use the same invariant to prove another characterization of triviality, involving the structure of the local automorphism
group of $(S,U)$ (to be defined in the same section). Finally, in Section \ref{nontriv} we show that germs of  
holomorphic foliations can also be employed to find many nonequivalent non-trivial classes: in
particular, we prove that there exist infinitely many non-equivalent hypersurfaces which are real analytic outside the complex hyperplane 
in $S$ passing through $0$.

For simplicity, from now on we fix $n=2$ and we consider coordinates $(z=x+iy, w=u+iv)$; most of the results  are also valid (with
slight modifications) for all $n\geq 2$.

\

\noindent {\bf Acknowledgments}: This paper was written while the author was a post-doc at the University of Vienna. I am very grateful to B.Lamel for posing the question in the first place, for several conversations regarding the problem and for otherwise helping in many crucial ways. I also wish to thank A.Saracco for discussing the topic, and for an useful suggestion about section \ref{nontriv}.

\section{Triviality criteria}\label{triv}
\noindent We say that a germ $(S,U)$ is ($C^k$) \emph{trivial} if it is ($C^k$) equivalent to $\{v=0\}$. In this section, we want to provide a few  conditions that are equivalent or in various ways related to the triviality of $S$, and to show how extension problems in one complex variable get in the picture. Our main tool is the invariant obtained in the following lemma, which is a consequence of Riemann's mapping theorem:
\begin{lemma} \label{inv}
Let $(S,U)$ be a smooth Levi flat germ centered at $0\in \mathbb C^2$. Then the following conditions are equivalent:
\begin{itemize}
\item [(a)] $S$ is trivial;
\item [(b)] there exists a neighborhood $\mathcal V$ of $0$ and a holomorphic foliation $\mathcal F$ in $V=\mathcal V\cap U$ which is smooth up to the boundary.
\end{itemize}
\end{lemma}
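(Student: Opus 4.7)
The implication (a)$\Rightarrow$(b) should be immediate: given a biholomorphism $\Phi:V\to V'\subset\{v>0\}$ realizing the triviality, I would pull back under $\Phi^{-1}$ the standard holomorphic foliation $\{w=\textrm{const}\}$ of the target to obtain a holomorphic foliation of $V$ that is smooth up to $S$. The substantive direction is (b)$\Rightarrow$(a), which I plan to establish by building the equivalence with the hyperplane directly out of the foliation $\mathcal{F}$.

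My first step is to normalize: after a linear change of coordinates I may assume $T^{1,0}_0 S=\mathbb{C}\,\partial_z$. The transverse disk $\Delta:=\{z=0\}$ then meets the leaf through $0$ of the Levi foliation transversally, and by smoothness of $\mathcal{F}$ up to $S$ together with the implicit function theorem it will meet every nearby leaf (of $\mathcal{F}$ or of the Levi foliation of $S$) in exactly one point. Letting $\Pi:V\cup S\to\Delta$ be the resulting leaf projection, $\Pi$ is holomorphic on $V$ and smooth up to $S$; since $\Delta$ is itself a section of $\Pi$, the image $\Pi(V)=V\cap\Delta$ is a planar half-disk bounded by the smooth arc $\gamma:=S\cap\Delta$.

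Next I would invoke Riemann's mapping theorem together with Kellogg's smooth boundary regularity to produce, after shrinking near $0$, a biholomorphism $\phi$ of $\Pi(V)$ onto a neighborhood of $0$ in the upper half-plane $\{\operatorname{Im} w>0\}$, extending $C^\infty$-smoothly up to the boundary and sending $\gamma$ into the real axis. Setting $\tilde w:=\phi\circ\Pi$ and $\tilde z:=z$, the map $\tilde\Phi:=(\tilde z,\tilde w)$ is holomorphic on $V$, smooth up to $S$, and $\tilde w|_S$ is real-valued. To conclude I need to verify that $\tilde\Phi$ is a local biholomorphism at the origin: its Jacobian is diagonal with entries $1$ and $\phi'(0)\,\Pi_w(0)$, and the second factor is nonzero because $\Pi$ is constant along leaves, so $d\Pi|_0$ annihilates $T^{1,0}_0\ell_0=\mathbb{C}\,\partial_z$; hence $\Pi_z(0)=0$, and since $d\Pi|_0\neq 0$ one must have $\Pi_w(0)\neq 0$.

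The main obstacle will be the $C^\infty$ boundary behavior of the Riemann map $\phi$, for which I rely on Kellogg's classical regularity theorem---without it one can still get a biholomorphism, but not one smooth up to $S$. A minor preliminary, to be settled at the outset, is to make precise the meaning of ``smoothness of $\mathcal{F}$ up to the boundary'' and to deduce from it the smoothness up to $S$ of the projection $\Pi$; under the natural interpretation (existence of a smooth foliation chart for $\mathcal{F}$ near $0\in S$, whose leaves on $S$ necessarily coincide with the Levi foliation) this is automatic.
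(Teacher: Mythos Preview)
Your proposal is correct and follows essentially the same strategy as the paper: normalize so that $\{z=0\}$ is transverse to the Levi foliation at $0$, apply the Riemann mapping theorem with smooth boundary regularity to the one-dimensional slice $U\cap\{z=0\}$, and then use the holomorphic foliation $\mathcal F$ to propagate this one-variable map to a full equivalence. The only cosmetic difference is the direction of the construction: the paper parametrizes the leaves by $\{w=f(\eta,z)\}$ and builds the map $\Phi:\{v>0\}\to U$ by $(z,w)\mapsto(z,f(\phi(w),z))$, whereas you use the leaf projection $\Pi$ to build the inverse map $U\to\{v>0\}$ by $(z,w)\mapsto(z,\phi(\Pi(z,w)))$; these are manifestly inverse to one another.
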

\begin{remark}
Clearly, if $\mathcal F$ is as above then $\mathcal F |_S$ coincides with the Levi foliation of $S$. Moreover, $\mathcal F$ is uniquely defined by the previous condition. In fact, suppose that the leaf of $\mathcal F$ passing through the point $p_0=(z_0,w_0)$ is expressed as $\{w=f(p_0,z)\}$, and let $\mathcal G$ be another foliation, satisfying the same condition, such that the leaf through $p_0$ is $\{w=g(p_0,z)\}$. Then, for every $k\in \mathbb N$, $\frac{\partial^k f}{\partial z^k}(p_0)$ and $\frac{\partial^k g}{\partial z^k}(p_0)$ are holomorphic in $p_0$ and smooth up to the boundary; moreover, they coincide along $S$, hence everywere. It follows that $\mathcal F \equiv \mathcal G$.
\end{remark}
\begin{proof}
The implication (a) $\Rightarrow$ (b) is clear, as we can take the image of the foliation induced by $\{w=c\}$, ${\rm Im}\ c\geq 0$. To show that (b) $\Rightarrow$ (a), suppose that $H_0(S)=\{w=0\}$; then the intersection $S\cap \{z=0\}$ is transversal. Moreover, by the Riemann mapping theorem follows that (up to a possible shrink of U) there exists a biholomorphism $$\phi:W\cap \{v>0\}\to U\cap \{z=0\}$$ (where $W$ is a neighborhood of $0$ in $\mathbb C$). It is a well-known (but highly non-trivial) fact that $\phi$ is smooth up to the boundary: see, for example, \cite{Be}.

Observe that (again, after a possible shrinking of $U$) all the leaves of $\mathcal F$ are of the form $\{w=f(\eta,z)\}$, where $\eta\in U\cap \{z=0\}$, $z\in D(0,\varepsilon)$ and $f$ is a holomorphic function, smooth up to $S$, such that $f(\eta,0) = \eta$. Defining
$$\Phi:D(0,\varepsilon)\times (W\cap \{v>0\})\to U, \ \Phi(z,w) = (z, f(\phi(w),z))$$
we have that $\Phi$ is a biholomorphism, smooth up to the boundary (since it is a composition of smooth maps).
\end{proof}
As a first application of the previous characterization, we obtain that (if $S$ is smooth) it is sufficient to check triviality by a continuous map to conclude that $S$ is $C^\infty$ trivial:

\begin{lemma}\label{cont}
Let $(S,U)$ be a smooth Levi flat germ. Then $S$ is (smoothly) trivial if and only if it is $C^0$ trivial.
\end{lemma}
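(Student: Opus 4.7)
The direction ``smoothly trivial $\Rightarrow$ $C^0$ trivial'' is immediate. For the converse, assume $(S,U)$ is $C^0$-trivial via a biholomorphism $\Phi_0:V_1\to V_2$, continuous up to the boundary, with $V_2\subset\{v>0\}$ and $\Phi_0(S\cap\mathcal V_1)\subset\{v=0\}$. The plan is to construct, via a Schwarz reflection argument, a holomorphic foliation $\mathcal F$ on $V_1$ which is $C^\infty$ up to $S$, and then invoke Lemma \ref{inv} to produce a smooth equivalence. The natural candidate is the $\Phi_0$-pullback $\mathcal F:=\Phi_0^{-1}(\{w=c\})$: it is a holomorphic foliation of $V_1$, and by the CR continuity of $\Phi_0$ on $S$ it restricts there to the Levi foliation.

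The main ingredient is Schwarz reflection applied to the $w$-component $g$ of $\Phi_0$: it is holomorphic on $V_1$, continuous up to $S$, and takes \emph{real} values on $S$, because $\Phi_0(S)\subset\{v=0\}$. First I would restrict $g$ to the transversal slice $V_1\cap\{z=0\}$, a planar Jordan domain with smooth boundary $S\cap\{z=0\}$. Composing with the Riemann map $\phi:W\cap\{v>0\}\to V_1\cap\{z=0\}$ from Lemma \ref{inv} (smooth up to the boundary by \cite{Be}), the function $g\circ\phi$ is holomorphic on $W\cap\{v>0\}$ with continuous real boundary values on $W\cap\mathbb R$; by classical Schwarz reflection it extends holomorphically to $W$. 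Hence $g$ is $C^\infty$ up to the boundary on $\overline{V_1\cap\{z=0\}}$, which in particular says that the leaf-parameter correspondence between the leaf spaces of $\mathcal F$ and $\{w=c\}$ is smooth.

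To upgrade to joint smoothness of $\mathcal F$ near $S$, after arranging $H_0(S)=\{w=0\}$ I would write the leaves of $\mathcal F$ as near-horizontal graphs $\{w=F(c,z)\}$, where $c$ ranges over a half-disk in $\mathbb C$ (the leaf parameter) and $F(c,\cdot)$ is holomorphic in $z$. The function $F$ is jointly holomorphic in $(c,z)$ in the interior, while on the boundary $\{{\rm Im}\,c=0\}$ it is the smooth parametrization of the Levi foliation of $S$. Joint smoothness of $F$ up to $\{{\rm Im}\,c=0\}$ should follow from standard one-variable boundary regularity for holomorphic functions with $C^\infty$ boundary data on the upper half-plane (applied in $c$ for each fixed $z$), together with the holomorphicity of $F$ in $z$. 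With $\mathcal F$ smooth up to $S$, Lemma \ref{inv} produces the desired $C^\infty$ equivalence between $(S,U)$ and the hyperplane.

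The main obstacle I anticipate is precisely this last ``slice-to-joint smoothness'' step: while the one-variable Schwarz reflection on the $\{z=0\}$ transversal is essentially automatic, propagating smoothness to the two-variable parametrization of $\mathcal F$ up to the Levi-flat boundary requires either uniform regularity of the Riemann maps in the $z$-parameter or a direct two-variable boundary regularity statement.
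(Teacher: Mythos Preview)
Your overall strategy---pull back the foliation $\{w=c\}$ by $\Phi_0$, show it is $C^\infty$ up to $S$, then invoke Lemma~\ref{inv}---matches the paper's. The gap you flag at the ``slice-to-joint smoothness'' step is genuine, and the paper closes it by a different, more direct route.

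Rather than parametrizing leaves by $F(c,z)$ and trying to push one-variable boundary regularity in $c$ uniformly in $z$, the paper works with the \emph{tangent distribution} of $\mathcal F$: writing the complex tangent line at $p\in\overline U$ as $\{w=g(p)(z-z(p))+w(p)\}$, the slope $g$ is a single function, holomorphic on $U$ and continuous on $\overline U$. The key observation is that $g|_S$ is automatically $C^\infty$, since on $S$ the foliation coincides with the smooth Levi foliation of $S$---no Schwarz reflection is needed for this. One then applies the Plemelj jump formula \emph{on the hypersurface $S\subset\mathbb C^2$} (as in \cite{HL}), not on a one-dimensional slice: write $g|_S=G^+|_S-G^-|_S$ with $G^\pm$ holomorphic on either side of $S$ and $C^\infty$ up to $S$; then $G^+-g$ continues $G^-$ holomorphically across $S$, whence $g=G^+-(G^+-g)\in C^\infty(\overline U)$. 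Integrating the now-smooth distribution yields a $C^\infty$ foliation, and Lemma~\ref{inv} finishes.

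Your Schwarz-reflection argument on the $w$-component of $\Phi_0$ along $\{z=0\}$ is correct but only shows the leaf-space correspondence is smooth, which is not where the difficulty lies (one could simply parametrize leaves by their intersection with $\gamma=S\cap\{z=0\}$ from the start and skip that step). The substantive missing ingredient is a two-variable boundary-regularity statement; the hypersurface Plemelj formula supplies it in one stroke, bypassing the uniformity-in-$z$ issue entirely.
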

\begin{remark} Even when the germ $S$ is smooth and trivial, a trivializing equivalence $\Phi$ needs not be of class $C^\infty$ (though the normal component of $\Phi$ will necessarily be smooth, see \cite{BL}): for instance one can consider an automorphism $\Phi=(\phi_1(z,w),\phi_2(w))$ of the hyperplane $\{v=0\}$ with $\phi_1$ just continuous up to the boundary (cfr. example \ref{authyp}). Hence, in Lemma \ref{cont} a smooth equivalence to the hyperplane is in general realized by a different $\Phi$.
\end{remark}
\begin{proof}
Assume that $(S,U)$ is $C^0$ trivial; as in Lemma \ref{inv}, we can locally find a foliation $\mathcal F$ which is holomorphic in $U$ and extends continuously up to $S$. With a suitable choice of coordinates, we can write the tangent distribution of $\mathcal F$ as $T(p)=\{w= g(p)(z-z(p)) + w(p)\}$, where $T(p)$ is the (complex) tangent plane of $\mathcal F$ at $p\in \overline U$, and $g(p)$ is holomorphic on $U$ and continuous on $\overline U$. Since $\mathcal F|_S$ coincides with the foliation of $S$ (which is $C^\infty$), $g|_S$ is a smooth $CR$ function. By Plemelj formulae for hypersurfaces (see \cite{HL}, Appendix B), there are two functions $G^+\in \mathcal O(U)\cap C^\infty(\overline U)$, $G^-\in \mathcal O(\mathcal U\setminus \overline U)\cap C^\infty(\mathcal U \setminus U)$ such that $G^+|_S - G^-|_S = g|_S$: it follows that $G(p)=G^+(p)-g(p)$, $p\in U$, is the analytic continuation of $G^-$ to $U$ and hence it is holomorphic in a neighborhood of $0$. In particular $g\in C^\infty(\overline U)$, and the distribution $T(p)$ is smooth. Integrating $T(p)$ we obtain that $\mathcal F$ is smooth up to $\overline U$, and by Lemma \ref{inv} follows that $S$ is trivial.\end{proof}
Denote with $H_p(S)$ the complex tangent space at the point $p\in S$. From now on, unless otherwise stated, we are going to assume that $H_0(S)=\{w=0\}$ (we may even ask that $\{w=0\}\subset S$). Then the leaves of $S$ can be expressed as $$w=g(t,z)=\sum_{k=0}^\infty b_k(t)z^k$$
where $(t,z)$ lies in a neighborhood of $0$ in $\mathbb R\times \mathbb C$, and each $b_k$ is a $C^\infty$, complex-valued function of $t$. We observe, however, that although the sequence $\{b_k\}$ contains all the \lq\lq information\rq\rq\ about $S$, it is of course not a natural choice since it depends on the parametrization chosen. To get around this, let $\gamma = S\cap \{z=0\}$; then $\gamma\subset \mathbb C_w$ is a smooth real curve, parametrized by $t \to b_0(t)$. We define $a_k:\gamma\to \mathbb C$ in the following way:
$$ a_k(p) = b_k(b_0^{-1}(p)),\ p\in \gamma.$$
Let $U_0 = U\cap \{z=0\}$; then $U_0$ is a one-sided neighborhood of $\gamma$ in $\mathbb C_w$. The triviality of $S$ is linked to the extendability properties of the $a_k$'s to $U_0$:
\begin{lemma} \label{inver} Let $S$ be as in Lemma \ref{inv}. If $S$ is trivial, then each $a_k$, $k\geq 1$, is the boundary value of a holomorphic function defined on $\mathcal W\cap U_0$ and smooth up to the boundary, where $\mathcal W$ is a fixed neighborhood of $0$ in $\mathbb C^2$.
\end{lemma}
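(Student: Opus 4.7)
The plan is to read off the extensions of the $a_k$'s directly from the foliation provided by Lemma \ref{inv}. Assuming $S$ trivial, I obtain a holomorphic foliation $\mathcal F$ on $V = \mathcal V \cap U$ that is smooth up to $S$ and whose restriction to $S$ is the Levi foliation. Since $H_0(S) = \{w=0\}$, the slice $\{z=0\}$ meets $S$ transversally at $0$; by continuity of $\mathcal F$ up to $\overline{U}$, after shrinking $\mathcal V$ it will meet every leaf of $\mathcal F$ transversally as well. As in the proof of Lemma \ref{inv}, this allows me to parametrize the leaves of $\mathcal F$ by their intersection with $\{z=0\}$: I obtain a function $f(\eta,z)$, jointly holomorphic for $\eta \in U_0 := U\cap\{z=0\}$ and $z\in D(0,\varepsilon)$, smooth up to $\gamma\times D(0,\varepsilon)$, satisfying $f(\eta,0)=\eta$ and such that $\{w=f(\eta,z)\}$ is the leaf through $(0,\eta)$.

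Next I expand in powers of $z$: write $f(\eta,z) = \sum_{k\geq 0} c_k(\eta) z^k$ with $c_k(\eta) = \tfrac{1}{k!}\,\partial_z^k f(\eta,0)$. Each $c_k$ is then holomorphic on $\mathcal W \cap U_0$ for an appropriate bidisc $\mathcal W$ about the origin, and smooth up to $\gamma$. It remains to identify $c_k|_\gamma$ with $a_k$: for $\eta = b_0(t)\in\gamma$, the leaf $\{w=f(\eta,z)\}$ must coincide with the Levi leaf through $(0,b_0(t))$, namely $\{w=\sum_k b_k(t) z^k\}$. Matching Taylor coefficients in $z$ gives $c_k(b_0(t))=b_k(t)$, hence $c_k(\eta)=b_k(b_0^{-1}(\eta))=a_k(\eta)$ on $\gamma$ by definition of $a_k$. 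This exhibits each $a_k$ ($k\geq 1$) as the boundary value of a holomorphic function on $\mathcal W\cap U_0$ smooth up to the boundary, as required.

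All the analytic substance of the statement is already packaged in Lemma \ref{inv}; the role of the present lemma is just to translate its conclusion into one complex variable by slicing $\mathcal F$ with $\{z=0\}$ and reading off the successive Taylor coefficients in $z$. Accordingly I do not foresee a genuine obstacle beyond verifying that $\{z=0\}$ remains transverse to every leaf of $\mathcal F$ after shrinking and that the leaves depend smoothly on the parameter $\eta\in\overline{U_0}$, both of which are already implicit in the construction recalled from Lemma \ref{inv}.
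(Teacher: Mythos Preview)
Your proof is correct and follows essentially the same approach as the paper's: obtain the holomorphic foliation $\mathcal F$ from triviality (via Lemma~\ref{inv}), write its leaves as graphs $w=f(\eta,z)$ parametrized by $\eta\in U_0$, and read off the holomorphic extensions of the $a_k$ as the Taylor coefficients $\tfrac{1}{k!}\,\partial_z^k f(\eta,0)$. You are a bit more explicit than the paper in verifying the boundary identification $c_k|_\gamma=a_k$, but the argument is otherwise the same.
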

\begin{proof}
Let $\Phi:U\cap \mathcal W\to D(0,\varepsilon)\times (\mathcal W'\cap \{v>0\})$ be a biholomorphism smooth up to the boundary, and let $\mathcal F$ be the foliation induced by the inverse images of $\{w=c\}$. Then we may suppose that all the leaves of $\mathcal F$ are graphs over $\mathbb C_z$, i.e.\ the leaf passing through $(z_0,w_0)$ has the form $\{w=g(w_0,z)\}$ with $g(w_0,z_0)=w_0$. Then clearly $\frac{\partial^k g}{\partial z^k}(w_0,z_0)$ is in $\mathcal O(\mathcal W\cap U)\cap C^\infty (\overline {\mathcal W\cap U})$, so that
$$a_k(w) = k! \frac{\partial^k g}{\partial z^k}(0,w),\ w\in U_0$$
is the required extension of $a_k$.
\end{proof}
\begin{remark}
Lemma \ref{inver} gives a necessary condition for the triviality of $S$; we observe, however, that the converse is not true, even when each $a_k$ extends holomorphically to a fixed neighborhood on both sides. For example, let $S$ be defined as the image of $\mathbb C\times \mathbb R \ni (z,u)\to (z,g(z,u))\in \mathbb C^2$, with $g(z,u) = \sum_{n=0}^\infty a_n(u) z^n$ and
$$a_0(u) = u, \ \ a_n(u) = \left ( \frac{i}{u-i} \right )^{n^2} \  {\rm for}\  n\geq 1.$$
Observe that $|a_n(u)|\leq 1$ for all $u\in \mathbb R$, $n\geq 1$, so that $g(z,u)$ is convergent (uniformly on compact subsets of $\mathbb C\times \mathbb R$). Moreover, each of the $a_n(u)$ is an analytic function; nevertheless, $g(z,u)$ is not, since it does not admit a holomorphic extension to a neighborhood of $0$ in $\mathbb C^2$. In fact, denote by $a_n(w)$ the holomorphic extension of $a_n(u)$ to $\mathbb C\setminus \{i\}$; then we have
$$a_n(iv) = (v-1)^{-n^2}, \ n\geq 1,\  0<v<1  \Rightarrow \limsup_{n\to +\infty}\sqrt[n]{|a_n(iv)|} = +\infty.$$
Hence the radius of convergence of the series $\sum_n a_n(iv) z^n$, for fixed $v\in(0,1)$, is $0$.
We also remark that it is possible to modify the previous example in order to obtain real valued coefficients: let
$$b_n(w) = \frac{1}{2}(a_n(w) + a_n(-w)) - \frac{i}{2}(a_n(w) - a_n(-w))$$
for $w\in \mathbb C\setminus \{i,-i\}$. Then it can be seen that $b_n(u)$ is real-valued and $|b_n(u)|\leq \sqrt{2}$ for $u\in \mathbb R$; moreover, $|b_n(iv)|\geq \frac{\sqrt{2}}{2}|a_n(iv)|$ for $0<v<1$, so that the series $\sum_n b_n(w)z^n$ has the desired properties.

\end{remark}

In some particular cases, the previous simple lemma has already strict consequences on the structure of a trivial Levi flat germ $S$. The following corollary is an example:
\begin{cor}
Let $S$ be a Levi flat hypersurface, and suppose that it contains the germ $R$ (with $0\in R$) of a real analytic, totally real submanifold of dimension $2$. Then $S$ is trivial if and only if it is real analytic.
\end{cor}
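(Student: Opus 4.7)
The direction $S$ real-analytic $\Rightarrow$ $S$ trivial is the classical fact cited as \cite{AF}. For the converse, suppose $S$ is trivial and let $R \subset S$ be the totally real, real-analytic surface. Since $R$ is maximally totally real, a local biholomorphism of $(\mathbb{C}^2, 0)$ carries $R$ to $\mathbb{R}^2 = \{y = v = 0\}$; a further real-linear change preserving $\mathbb{R}^2$ normalizes $H_0(S) = \{w=0\}$, using that $H_0(S)\cap T_0R$ is one-dimensional (because $H_0(S)$ is complex and $T_0 R$ is totally real inside the three-dimensional $T_0 S$). After these changes, $\gamma = S \cap \{z=0\}$ is the real $u$-axis, and the Levi leaves of $S$ take the form $\mathcal{L}_u = \{w = \psi(u, z)\}$ with $\psi(u, 0) = u$.

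By Lemma \ref{inv}, triviality gives a holomorphic foliation in $U$ smooth up to the boundary, so the leaves extend to $\{w = \psi(w_0, z)\}$ with $\psi$ jointly holomorphic on $\{\mathrm{Im}\, w_0 > 0\} \times D$ and smooth up to $\mathbb{R} \times D$ (absorbing the Riemann map as a boundary-smooth reparametrization of the half-plane). The key step is to combine $R \subset S$ with the identity principle to force reality of $\psi$ on $\mathbb{R}^2$. For each real $u$ near $0$, the leaf $\mathcal{L}_u$ meets $R$ in a real one-dimensional curve, which in the parametrization $z \mapsto (z, \psi(u, z))$ cuts out an open interval of $x \in \mathbb{R}$ on which $\mathrm{Im}\,\psi(u, x) = 0$. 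Since $z \mapsto \psi(u, z) = \sum_k c_k(u) z^k$ is holomorphic, the function $x \mapsto \mathrm{Im}\,\psi(u, x) = \sum_k \mathrm{Im}\,c_k(u)\,x^k$ is real-analytic on a real interval; its vanishing on an open subinterval forces $c_k(u) \in \mathbb{R}$ for every $k$, and hence $\psi(u, x) \in \mathbb{R}$ for all $(u, x) \in \mathbb{R}^2$ near $0$.

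With this symmetry in place, define $\tilde\psi(w, z) := \overline{\psi(\bar w, \bar z)}$ on $\{\mathrm{Im}\, w < 0\} \times D$; it is holomorphic in $(w, z)$ and matches $\psi$ on $\mathbb{R}\times D$ by the reality just established. The pieces glue to a continuous function $\Psi$ on a neighborhood $V$ of $0 \in \mathbb{C}^2$, separately holomorphic in $z$ by construction and separately holomorphic in $w$ across $\mathbb{R}$ by the one-variable removable-singularity theorem for continuous functions holomorphic off a real-analytic curve. Hartogs' theorem then produces joint holomorphicity, so that $G(z, w) := (z, \Psi(w, z))$ is a local biholomorphism sending $\{v = 0\}$ onto $S$; thus $S$ is real analytic. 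The main obstacle is precisely this joint reflection: as the remark following Lemma \ref{inver} shows, individual real analyticity of the coefficients $a_k$ does not suffice, so the argument must use the joint extension of $\psi$ provided by triviality rather than summing coefficient by coefficient.
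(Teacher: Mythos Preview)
Your proof is correct and follows essentially the same line as the paper's: normalize $R$ to $\mathbb{R}^2$ and $H_0(S)$ to $\{w=0\}$, use $R\subset S$ to force the leaf functions $\psi(u,\cdot)$ to be real on the reals, then combine the joint holomorphic extension to $\{\mathrm{Im}\,w>0\}\times D$ furnished by triviality with Schwarz reflection to obtain analyticity of $S$. The only cosmetic difference is that the paper reflects each Taylor coefficient $a_k$ separately and invokes convergence of the resulting series, whereas you perform the two-variable reflection on $\psi$ in one stroke; also, your appeal to the Riemann map is unnecessary here since $\gamma$ is already the real axis.
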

\begin{proof}
As it is well known (see for example \cite{BER}), up to a holomorphic change of coordinates in a neighborhood of $0$ we can assume $R = \{v=0, y=0\}$, so that $S\cap \{z=0\} = \{v=0\}=\gamma \subset \mathbb C_w$ and $S\cap \{y=0\} = R$. As before, we write (the leaves of) $S$ as $w=g(t,z) = \sum_n a_n(t)z^n$, $t\in \gamma$: observe that, since $S\cap \{y=0\} = R \subset \{v=0\}$, the series $\sum_n a_n(t) x^n$ is real valued (for $x$ in a neighborhood of $0$ in $\mathbb R$), which implies that each $a_k$ is real valued. If $S$ is trivial, by the proof of Lemma \ref{inver} $a_k$ is the smooth boundary value  of a holomorphic function $a_k(w)$ (defined on a fixed neighborhood of $0$ in $\{v>0\}$) \emph{and} the series  $g(z,w)= \sum_n a_n(w)z^n$ converges. By the Schwarz reflection principle follows that each $a_k$ is real analytic and the series $g(z,w)$ extends to a neighborhood of $0$ in $\mathbb C^2$; thus $S$ (which is the graph of $g(z,u)$) is real analytic.

Vice versa, if $S$ is real analytic then it is locally equivalent to a hyperplane in a sense much stronger than the one we are employing (see \cite{AF}).
\end{proof}
On the other hand, if the $a_k$ are all suitably extendable it follows the triviality of $S$. For instance we have
\begin{lemma}
Suppose that there exist a neighborhood $\mathcal W$ of $0$ in $\mathbb C$ and a constant $C>0$ such that each $a_k$ extends to $W = \mathcal W\cap U_0$ as a holomorphic, smooth up to the boundary function, and $\|a_k\|_W\leq C^k \|a_k\|_{\gamma}$ (where $\| \cdot \|_K$ is the $\sup$ norm on $K$). Then $S$ is trivial.
\end{lemma}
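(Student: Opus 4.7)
The plan is to apply Lemma \ref{inv} by constructing directly a holomorphic foliation of a one-sided neighborhood of the origin in $U$ which extends at least continuously to $S$, and then upgrading it to be smooth up to $S$ by the Plemelj-type argument of Lemma \ref{cont}. The foliation is built from the series $\sum_k a_k(w)z^k$ with $w$ now allowed to range over the two-dimensional domain $W$, not only over the curve $\gamma$.

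I first use the smoothness of $S$ to control the coefficients on $\gamma$. Since $g(t,z)=\sum_k a_k(t)z^k$ describes the leaves of the smooth hypersurface $S$, after shrinking $\gamma$ and choosing $r_0>0$ small enough we have $|g(t,z)|\le M$ on $\gamma\times \overline{D(0,r_0)}$, so by Cauchy's formula applied to each fixed $t$,
\[
\|a_k\|_\gamma\le M/r_0^k.
\]
Combining with the hypothesis, $\|a_k\|_W\le M\rho^{-k}$ with $\rho=r_0/C$. Hence, for any $r<\rho$, the series
\[
G(z,w)=\sum_{k=0}^\infty a_k(w)z^k
\]
converges uniformly on $\overline{D(0,r)}\times \overline{W}$. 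By Weierstrass, $G$ is holomorphic on $D(0,r)\times (W\cap U_0)$; since each $a_k$ is smooth up to $\gamma$, $G$ is also continuous on the closure (indeed, smooth in $w$ up to $\gamma$ term by term, but continuity is all that will be needed).

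Next, I consider $\Psi(z,w)=(z,G(z,w))$. By the very definition of the $a_k$, $a_0(p)=b_0(b_0^{-1}(p))=p$ on $\gamma$, hence the holomorphic extension is $a_0(w)=w$; in particular $\partial_w G(0,0)=1$, so $\Psi$ is a biholomorphism from a neighborhood of the origin in $D(0,r)\times (W\cap U_0)$ onto a relative neighborhood of the origin in $U$, continuous up to the boundary and sending $D(0,r)\times \gamma$ into $S$. Indeed, for $w_0\in\gamma$ with $t_0=b_0^{-1}(w_0)$, the image $\{(z,G(z,w_0)):z\in D(0,r)\}=\{(z,g(t_0,z))\}$ is precisely the Levi leaf of $S$ through $(0,w_0)$. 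Pushing forward the trivial foliation $\{w=\text{const}\}$ under $\Psi$ therefore yields a holomorphic foliation $\mathcal F$ of a one-sided neighborhood $V$ of $0$ in $U$, continuous up to $S$, whose trace on $S$ coincides with the Levi foliation.

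To conclude, I upgrade the continuous foliation $\mathcal F$ to a smooth one exactly as in the proof of Lemma \ref{cont}: the slope function of $\mathcal F$ is continuous on $\overline V$, holomorphic on $V$, and smooth on $S$ (being the slope of the Levi foliation of the smooth hypersurface $S$), so Plemelj's formulae force it to be $C^\infty$ up to the boundary; consequently $\mathcal F$ is smooth up to $S$, and Lemma \ref{inv} gives the triviality of $S$. The main subtle point is verifying that the image $V=\Psi(D(0,r)\times (W\cap U_0))$ lies on the prescribed side of $S$ rather than the opposite one; this is a matter of tracking the sign of the transverse component of $\Psi$ at points $(0,w)$ with $w\in W\cap U_0$ close to $\gamma$, and follows from the fact that $U_0=U\cap\{z=0\}$ was \emph{defined} as the trace of $U$ on $\{z=0\}$, together with the identity $\Psi(0,w)=(0,w)$.
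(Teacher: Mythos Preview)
Your proof is correct and follows essentially the same approach as the paper's: Cauchy estimates on $\gamma$ plus the hypothesis give a uniform radius of convergence for $\sum a_k(w)z^k$ over $W$, and the resulting map $(z,w)\mapsto (z,\sum a_k(w)z^k)$ has nonvanishing Jacobian near $0$ and produces the desired foliation. The only cosmetic difference is the placement of the Plemelj argument: the paper invokes ``the same arguments as in Lemma \ref{cont}'' directly on $g(w',z)$ to obtain smoothness up to $\gamma\times D$ before defining $\Phi$, whereas you first settle for continuity of $G$, build the continuous foliation, and then run the Plemelj step on the slope function exactly as in Lemma \ref{cont} to upgrade to $C^\infty$---this is a reordering of the same ingredients, and arguably cleaner since it quotes Lemma \ref{cont} as a black box.
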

\begin{proof}
Since the functions $g(\cdot,z)$ are defined for $|z|<\varepsilon$ and are uniformly bounded, by the Cauchy estimates there exist constants $M,R>0$ such that $|a_k(p)|\leq M/R^k$ for all $p\in \gamma$. By hypothesis, each $a_k$ extends in such a way that $|a_k(w')|\leq M (C/R)^k$ for all $w'\in W$, so that all the $g(w',z)$ have a positive, uniform radius of convergence. Hence, by the same arguments as in Lemma \ref{cont}, the function $g(w',z)$ is holomorphic in $W\times D$ (where $D\subset \mathbb C_z$ is a neighborhood of $0$) and smooth up to $\gamma\times D$. Let $\Phi:D\times  W \to \mathbb C^2$ be defined as $$\Phi(z,w')= (z, g(w',z)).$$ Observe that, because of our choice of $g$, the differential of $\Phi|_{D\times \gamma}$ has maximal rank; then by the holomorphicity of $\Phi$ follows that  the Jacobian of (a smooth extension to $D\times \mathcal W$ of) $\Phi$ does not vanish. Thus we locally obtain a foliation of $U$, whose leaves $\Sigma_c$ ($c\in W$) can be written as $\Sigma_c = \Phi(\cdot,c)$, which is holomorphic and smooth up to the boundary; by Lemma \ref{inv} follows the triviality of $S$.
\end{proof}

\section{Local automorphisms}\label{autom}
Let $(S,U)$ be as before. We say that $S$ admits a \emph{local automorphism} if the germs generated in $p_1, p_2\in S$ are locally equivalent according to Definition \ref{loceq}. The local automorphisms form a pseudogroup, in the sense that if $\Phi_1:V_1\to U$ and $\Phi_2:V_2\to U$ are local equivalences then $\Phi_2\circ\Phi_1$ is only defined on $V_1\cap \Phi_1^{-1}(V_2)$. Observe that the restriction $\Phi|_S$ of a local automorphism $\Phi$ to $S$ is a local $CR$ automorphism of $S$, i.e. it preserves its foliation and it is holomorphic along the fibers. The restriction operator $\Phi|_S$ is clearly injective (as the components of $\Phi$ are holomorphic), but it is not surjective (cfr. Example \ref{authyp}).

 If $0\in S$, we are mainly concerned about the germ of $(S,U)$ in $0$, hence we will restrict our attention to those local automorphisms $\Phi$ which are defined on a domain $V\ni 0$, and are invertible in the same category (i.e. $0\in \Phi(V)$). We denote by $Aut(S,U)$ the set of such automorphisms.
 \begin{example}\label{authyp}
\emph {Let $S$ be the real hyperplane $S=\{v=0\}\subset \mathbb C^2$, $U=\{v>0\}$. Then the elements of $Aut(S,U)$ must preserve the holomorphic foliation of $U$ by $\{w=c\}$, and are of the form}
 $$\Phi(z,w)=(\phi_1(z,w),\phi_2(w))$$
 \emph{where $\phi_2$ is a local automorphism of the upper half-plane $H\subset \mathbb C_w$ around $0$, and $\phi_1$ is holomorphic in $(z, w)$, smooth up to the boundary, and with $\frac{\partial}{\partial z}\phi_1(0,0)\neq 0$. Observe that, by the Schwarz reflection principle, $\phi_2$ is actually a holomorphic function in a neighborhood of $0$ in $\mathbb C_w$, real valued along $\{v=0\}$. It is also clear that the $CR$ automorphisms of $S$ are not, in general, extendable to $U$; if $\phi_1(z,u) = \sum_{k} a_k(u)z^k$, then in order for this to happen  each $a_k$ must be holomorphically extendable to (a neighborhood of $0$ in) the upper half plane $H$.}
\end{example}
However, we will mostly be interested in a  subset of $Aut(S,U)$. Let $\mathcal F$ be the Levi foliation of $S$: we denote by $Aut'(S,U)$ the subset of those automorphisms $\Phi$ whose restriction to $S$ fix (rather than just preserving) $\mathcal F$, i.e.\ which act in such a way that $\Phi(F)\subset F$ for every leaf $F\in \mathcal F$.
\begin{example}
\emph{Let $(S,U)$ be as in the previous example; then for any $\Phi\in Aut'(S,U)$ we have}
$$\Phi(z,w)=(\phi_1(z,w),w)$$
\emph{(with $\phi_1$ as before), since in this case the restriction of $\phi_2$ to the real axis is the identity.}
\end{example}
We will regard $Aut(S,U)$, $Aut'(S,U)$ as pseudogroups, too, where the composition $\Phi_2\circ\Phi_1$ is defined whenever $0\in \Phi_2(V_2\cap \Phi_1(V_1))$.

As the examples above show, when $S$ is trivial there are actually \lq\lq many\rq\rq\ $CR$ automorphisms that extend to $Aut'(S,U)$. We can construct other examples in the following way: suppose that $H_0(S)=\{w=0\}$, and, for $c\in \mathbb C$, $|c|$ very small, let $\Gamma_c = (\gamma^1_c,\gamma^2_c)$ be the (uniquely determined) $CR$ automorphism such that $\Gamma_c$ fixes the Levi foliation $\mathcal F$ and $\gamma^1_c(z,w)=z+c$. Then it is easy to see (for example by using the holomorphic foliation of $U$) that all the $\Gamma_c$'s extend to $Aut'(S,U)$ whenever $S$ is trivial. Observe that the $CR$ automorphisms $\Gamma_c$ commute, and they form a local one (complex) parameter pseudogroup, i.e.\ $\Gamma_{c_2}\circ\Gamma_{c_1}=\Gamma_{c_1+c_2}$; moreover, their coefficients are holomorphic in $c$. Our aim is to prove that all this can only happen when $S$ is trivial.

\begin{prop}
Let $(S,U)$ be a smooth Levi flat germ centered at $0\in \mathbb C^2$. Then the following conditions are equivalent:
\begin{itemize}
\item [(a)] $S$ is trivial;
\item [(c)] there exists a one (complex) parameter pseudogroup $\Gamma_c$ ($c\in D_\varepsilon = \{z\in \mathbb C:|z|<\varepsilon\}$) of $CR$ automorphisms of $S$, holomorphic in $c$ with $\frac{\partial}{\partial c}\Gamma_0(0)\neq (0,\star)$, such that each $\Gamma_c$ extends to $Aut'(S,U)$.
\end{itemize}
\end{prop}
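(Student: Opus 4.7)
For the easy direction $(a) \Rightarrow (c)$, I would pull back the holomorphic translations from the hyperplane model: given a trivialization $\Phi$ of $(S,U)$, set $\Gamma_c := \Phi^{-1} \circ T_c \circ \Phi$ with $T_c(z,w) := (z+c, w)$. These preserve the Levi foliation leafwise (since the $\Phi$-pullback of $\{w=\mathrm{const}\}$ is exactly the Levi foliation), form a one-parameter pseudogroup holomorphic in $c$, and, after a suitable normalization of $\Phi$, their derivative at $c=0$ has nonzero $z$-component.

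The substantive direction is $(c) \Rightarrow (a)$. The strategy is to recognize the pseudogroup as the flow of a holomorphic vector field, and then apply Lemma \ref{inv} to the foliation by its integral curves. I would define
$$X(p) := \frac{\partial}{\partial c}\Gamma_c(p) \Big|_{c = 0}$$
on a neighborhood $V$ of $0$ in $U$. Since $(c,p) \mapsto \Gamma_c(p)$ is separately holomorphic in $c$ and in $p$, it is jointly holomorphic by Hartogs, so $X$ is a holomorphic vector field on $V$. The hypothesis $\partial_c \Gamma_0(0) \neq (0, \star)$ says precisely that $X(0)$ has nonzero $z$-component, hence $X$ is nonvanishing near $0$, and its integral curves form a one-dimensional holomorphic foliation $\mathcal F$ of $V$, parametrized concretely by $\Psi(c, q) := \Gamma_c(q)$ on $D_\varepsilon \times (U \cap \{z = 0\})$: the transversality just noted makes $d\Psi_{(0,0)}$ invertible. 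Since each $\Gamma_c$ extends smoothly to $S$, expanding $\Gamma_c(p) = p + c\,\Xi(c,p)$ with $\Xi$ holomorphic in $c$ and separately smooth in $p$ up to $S$ identifies $X = \Xi(0, \cdot)$ as smooth up to $S$; hence $\Psi$, and with it $\mathcal F$, extends smoothly to $S$. Lemma \ref{inv} then yields triviality.

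The main obstacle is the boundary regularity of $X$: while joint holomorphy in the interior is immediate from Hartogs, one must also verify smoothness up to $S$ of $X$ and of the resulting foliation, which requires combining the boundary smoothness of each $\Gamma_c$ with its holomorphic dependence on $c$ (this is what the expansion $\Gamma_c(p) = p + c\,\Xi(c,p)$ is for). A reassuring consistency check is that on $S$, the orbits $c \mapsto \Gamma_c(p)$ lie inside the Levi leaves (since $\Gamma_c$ fixes the Levi foliation leafwise) and cover them by transversality of $X(0)$, so that $\mathcal F|_S$ is exactly the Levi foliation, as the uniqueness clause of Lemma \ref{inv} demands.
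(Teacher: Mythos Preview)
Your proof has a genuine gap at the Hartogs step. The hypothesis ``holomorphic in $c$'' refers to the $CR$ automorphisms $\Gamma_c$ of $S$, i.e.\ to the map $c\mapsto \Gamma_c(p)$ for $p\in S$; it says nothing about the extensions $\widetilde\Gamma_c\in Aut'(S,U)$. For $p\in U$ there is no a priori reason that $c\mapsto \widetilde\Gamma_c(p)$ is holomorphic---or even continuous---in $c$. Thus your invocation of Hartogs is premature: you only have separate holomorphicity in $p$ on $U$, and in $c$ on $S$, not separate holomorphicity in both variables on the open set $D_\varepsilon\times U$. The same issue undermines the expansion $\Gamma_c(p)=p+c\,\Xi(c,p)$: writing this with $\Xi$ holomorphic in $c$ and smooth in $p$ up to $S$ presupposes exactly the joint regularity you are trying to establish.

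This is precisely the point the paper isolates as nontrivial. Its proof devotes a separate lemma to showing that a function $F(c,z)$ which is holomorphic in $z\in U$ (and smooth up to a real curve $\lambda=\partial U$) for each fixed $c$, and holomorphic in $c$ only for $z\in\lambda$, is in fact jointly holomorphic on $D_\varepsilon\times U$. The argument uses the Plemelj jump decomposition along $\lambda$ to produce a function holomorphic across $\lambda$, then a Montel--Morera argument to propagate holomorphicity in $c$ from one side to the other. Once joint holomorphicity (and smoothness up to $S$) of $(c,p)\mapsto\widetilde\Gamma_c(p)$ is secured, your orbit/flow picture and the paper's agree: the orbits $\Sigma_p$ give a holomorphic foliation smooth up to $S$, and Lemma~\ref{inv} finishes. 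So your outline is correct in spirit, but the essential analytic content---passing from parameter-holomorphicity on $S$ to parameter-holomorphicity on $U$---is missing.
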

\begin{proof} We already observed that $(a)\Rightarrow (c)$. To prove the converse we first fix some notations: for any $c\in D_\varepsilon$, let $\widetilde \Gamma_c$ be the the (unique) extension of $\Gamma_c$ to an element of $Aut'(S,U)$, and for any $p\in \overline{U}$ let $\Sigma_p$ be the orbit of $p$ under the action of the $\widetilde \Gamma_c$'s:
$$\Sigma_p = \{\widetilde \Gamma_c(p): c\in D_\varepsilon\}.$$
Clearly, since $\frac{\partial}{\partial c}\Gamma_0(0)\neq (0,\star)$, up to a possible shrinking of $U$ we have $\Sigma_p=F_p$ for all $p\in S$, where $F_p$ is the leaf of the Levi foliation $\mathcal F$ passing through $p$. We claim that $\Sigma_p$ is a germ of complex curve also for $p\notin S$: we will show that the map $c\to \widetilde \Gamma_c(p)$ is holomorphic in $c$ (note that, a priori, $\Gamma_c$ may not even extend continuously in $c$). This is a consequence of the following lemma, which is probably well known; since it is not hard, we are going to give a proof in any case.
\begin{lemma}
Let $\lambda\ni 0$ be a smooth real curve embedded in a domain $\mathcal U\subset \mathbb C$, and let $U$ be a connected component of $\mathcal U\setminus \lambda$. Let $F:D_\varepsilon\times \overline U\to \mathbb C$ be a bounded function such that $F(c,\cdot)\in \mathcal O(U)\cap C^\infty(\overline U)$ for any fixed $c\in D_\varepsilon$, the first derivatives of $F(c,\cdot)|_\lambda$ are bounded (uniformly with respect to $c$), and $F(\cdot,z)\in \mathcal O(D_\varepsilon)$ for any fixed $z\in \lambda$. Then $F\in \mathcal O(D_\varepsilon\times U) \cap C^\infty(D_\varepsilon\times \overline U)$.
\end{lemma}
\begin{proof}
Let $\mathcal V$ be a small neighborhood of $0$ in $\mathbb C$, set $V_1=\mathcal V\cap U$, $V_2=\mathcal V \setminus \overline V_1$ and choose a real valued $\chi\in C^\infty_0(\lambda\cap\mathcal V)$ such that $\chi\equiv 1$ in a neighborhood of $0$. For any $c\in D_\varepsilon$, we define
$$F^+_c(z) = \frac{1}{2\pi i} \int_\lambda \frac{\chi(\zeta) F(c,\zeta)}{z-\zeta}d\zeta, \ z\in V_1,$$
$$F^-_c(z) = \frac{1}{2\pi i} \int_\lambda \frac{\chi(\zeta) F(c,\zeta)}{z-\zeta}d\zeta, \ z\in V_2.$$
According to the Plemelj formula, the holomorphic functions $F^+_c$ and $F^-_c$ extend continuously up to $\lambda$ for any $c\in D_\varepsilon$ and
$$F^+_c(z) - F^-_c(z) = \pm \chi(z)F(c,z), \ z\in \lambda,$$
(the sign depending on the choice of the orientation of $\lambda$; we shall assume that it be positive). Clearly, $F^+_c$ and $F^-_c$ are holomorphic in the parameter $c$. Moreover, if we define
$$G_c(z) = \begin{cases} F^+_c(z) - F(c,z), & z\in V_1 \\ F^-_c(z), & z\in V_2\end{cases}$$
then $G_c$ is holomorphic on $V_1$, $V_2$ and continuous up to a neighborhood of $0$ in $\lambda$, hence it is holomorphic on a neighborhood $\mathcal W$ of $0$ in $\mathbb C$.
We claim that $G_c$ is bounded, uniformly in $c$; since $F(c,z)$ is bounded by hypothesis, we must show that this is also the case for $F^+_c$ and $F^-_c$. Let $\widetilde F(c,z)$ be a smooth extension of $\chi F(c,z)|_{D_\varepsilon \times \lambda}$ to $D_\varepsilon \times U$. Since by hypothesis $F(c,z)|_{D_\varepsilon \times \lambda}$ is uniformly bounded, along with its first derivatives, with respect to $c$, we can choose a $\widetilde F$ satisfying the same property. Then we can express $F^+_c$, $F^-_c$ as follows:
$$F^+_c(z) = \frac{1}{2\pi i} \int_\lambda \frac{\chi(\zeta) F(c,\zeta)-\widetilde F(c,z)}{z-\zeta}d\zeta + \widetilde F(c,z), \ z\in V_1 $$
$$F^-_c(z) = \frac{1}{2\pi i} \int_\lambda \frac{\chi(\zeta) F(c,\zeta)-\widetilde F(c,z)}{z-\zeta}d\zeta, \ z\in V_2$$
(again, the fact that the term $\widetilde F(c,z)$ appears in the expression of $F^+_c$ rather than $F^-_c$ depends on the choice of the orientation of $\lambda$). Since the first derivatives - and thus by integration the difference quotients - of $\widetilde F$ are uniformly bounded, we get the same for $F^+_c$ and $F^-_c$.

Now, we claim that $G_c(z)$ is continuous in $c$ for any $z\in \mathcal W$. In fact, suppose the contrary, and for some $z_0\in \mathcal W\setminus V_2$ and $\epsilon>0$ let $\{c_k\}_{k\in \mathbb N}$ be a sequence such that $c_k\to c_0$ and $|G_{c_k}(z_0) - G_{c_0}(z_0)|>\epsilon$ for all $k\in \mathbb N$.
Since $G_c$ is bounded, by Montel's theorem there is a subsequence (we denote it again by $c_k$) such that $G_{c_k}$ converges to a holomorphic function $G'$ uniformly on compact sets; but since $G_c(z)$ is continuous in $c$ for all $z\in \mathcal W\cap V_2$, we must have $G'\equiv G_{c_0}$, a contradiction.

By the continuity of $G_c$, for any $z\in \mathcal W$ and any simple closed curve $\gamma\subset D_\varepsilon$ we can define
$$I_\gamma(z) = \int_\gamma G_c(z) dc.$$
Observe that $I_\gamma$ is holomorphic in $z$ and vanishing on $\mathcal W\cap V_2$, hence $I_\gamma\equiv 0$ on $\mathcal W$. By Morera's theorem, it follows that $G_c(z)$ is holomorphic in $c$ also for $z\in \mathcal W\cap V_1$, and so is $F(c,z)= F^+_c(z) - G_c(z)$. By reiterating the same argument, one obtains that $F(c,z)$ is also holomorphic for $z\in U$.
\end{proof}
Now, let $F_1(c,p)$, $F_2(c,p)$ be the components of $\widetilde \Gamma_c$: observe that our hypotheses imply that these functions are uniformly bounded  (since $\widetilde \Gamma_c(p)\in U$ for all $p$ in its domain of definition), along with the first derivatives of their restriction to $D_\varepsilon \times S$ (up to a possible shrinking of the domain, since $\Gamma_c(z)$ is smooth in $(c,z)$). Applying the previous lemma to the restriction of $F_1$, $F_2$ to $U\cap \{z=const.\}$ we obtain that $\widetilde \Gamma_c(p)$ is holomorphic in  $(c,p)\in D_\varepsilon\times U$ and smooth up to the boundary. The smoothness of $\widetilde \Gamma_c$, in particular, implies that $\frac{\partial}{\partial c}F_1(c,p)\neq 0$ for $p$ close enough to $0\in \mathbb C^2$, hence $\Sigma_p$ is a regular complex curve which is a graph over a small enough disk $D\subset \mathbb C_z$; notice that we can choose a $D$ independent from $p$ in a neighborhood of $0$ in $U$.

Let $U_0=U\cap \{z=0\}$; by the previous remarks, we have that (up to shrinking $U$) $U\subset \bigcup_c \widetilde \Gamma_c(U_0)$. For $p\in U_0$, $p=(0,w)$, we set $\Sigma_w=\Sigma_p$; then the union of the $\Sigma_w$ fills up $U$. Moreover, we claim that $\Sigma_{w_1}\cap\Sigma_{w_2} = \emptyset $ when $w_1\neq w_2$. In fact, observe that the set of the $\widetilde \Gamma_c$ is automatically a pseudogroup since the restrictions $(\widetilde \Gamma_{c_2} \circ \widetilde \Gamma_{c_1})|_S$ and $\widetilde \Gamma_{c_1+c_2}|_S$ coincide for all $c_1,c_2\in D_\varepsilon$. Suppose, then, that for $w_1\neq w_2$ there exists $q\in \Sigma_{w_1}\cap \Sigma_{w_2}$; in other words, there are $c_1,c_2\in D_\varepsilon$ such that $\widetilde \Gamma_{c_1}((0,w_1))= q = \widetilde \Gamma_{c_2}((0,w_2))$. This implies $\widetilde \Gamma_{c_1-c_2}((0,w_1)) = (0,w_2)$, which means that $\Sigma_{w_1}$ is not a graph over the disk $D\subset \mathbb C_z$; this contradicts our choices of $U$ and $U_0$.

By the previous arguments, for every $p\in U$ (where $U$ is possibly shrinked) we can define $\pi(p)=w$ to be the (uniquely determined) $w\in U_0$ such that $p\in \Sigma_w$. The function $\pi$ is then a holomorphic submersion $U\to U_0$ whose level sets coincide with the $\Sigma_w$; it follows that the $\Sigma_w$ form a holomorphic foliation of $U$ which is smooth up to the boundary. By Lemma \ref{inv} we have that $S$ is trivial.

\end{proof}

\section{Some non-trivial classes}\label{nontriv}
We want to show that the invariant defined in Section \ref{triv}, which identifies germs equivalent to the hyperplane, can also be employed to find infinitely many non-trivial classes $(S,U)$ that are not equivalent to each other. The following example shows one of the simplest ways to accomplish that; later, we are going to prove that there are infinitely many non-equivalent hypersurfaces $S$ that are analytic almost everywhere.
\begin{example}\label{aleph0}
\emph{Consider Levi flat hypersurfaces $S$, written as a $1$-parameter family of complex lines in the following way:
$$ S= \bigcup_t \{w = t + g(t)z\}, = \cup_t L_t$$
where  $g$ is a smooth complex valued germ such that $g(0)=0$. Alternatively, $S$ can be written as the image of the following map $\psi:\mathbb C \times \mathbb R\to \mathbb C^2$,
$$\psi(\zeta,t) = (\zeta, t + g(t)\zeta)$$
(where $(\zeta,t)$ belongs to a neighborhood of $(0,0)$ in $\mathbb C\times \mathbb R$), and since the differential of $\psi$ has rank $3$ in $(0,0)$ for any choice of $g$ we have that $S$ is in fact (a germ of) a regular Levi flat hypersurface of $\mathbb C^2$.}

\emph{Note that, if $S$ and $S'$ are the hypersurfaces corresponding to $g$ and $g'$, any $CR$ diffeomorphism $S\to S'$ induces a local diffeomorphism $\rho$ of $\mathbb R$ such that the leaf $L_t$ is mapped to $L'_{\rho(t)}$. Hence, by the arguments of section \ref{triv}, any local equivalence $\Phi:(S,U)\to (S',U')$ induces a diffeomorphism $\rho$ such that the local extendability properties of $g(t)$ and $g'(\rho(t))$ are the same for any $t$ in a neighborhood of $0$ in $\mathbb R$.}

\emph{Now, let $g_1$ be locally extendable in the neighborhood of all the points except $0$ (for example, analytic in $\mathbb R\setminus \{0\}$), and let $g_2$ be a germ without this property, but such that for every neighborhood $U$ of $0$ there is $t\in U$ such that $g_2$ is extendable around $t$. Finally, choose a $g_3$ that does not satisfy the latter (for example, a smooth, nowhere analytic real germ). Then, because of the observations above, the respective $S_1$, $S_2$ and $S_3$ are not equivalent to each other.}

\emph{The same kind of example can be modified to show the existence of a cardinality of continuum of classes. In fact, choose a \lq\lq Morse code\rq\rq\ set $M\subset \mathbb R$, made up by a sequence of points and segments which accumulate to $0$, and choose a function $g$ which is analytic outside $M$ and nowhere analytic in $M$. Then, in order for the relative hypersurfaces $S$ and $S'$ to be equivalent, the sequences $M$ and $M'$ corresponding to $g$ and $g'$ must be diffeomorphic (in a monotone way) in a neighborhood of $0$; clearly, there are infinitely many (in fact $2^{\aleph_0}$) such equivalence classes of sequences. (I thank A.Saracco for this observation). }
\end{example}
Example \ref{aleph0} shows that there are infinitely many classes of the type that we have called $S_2$; however, it also suggests that they may prove difficult to deal with. Now, we focus on Levi flat hypersurfaces of the first type, in which the method of example \ref{aleph0} doesn't allow to further distinguish subclasses.
\begin{definition}
Let $(S,U)$ be a Levi flat germ, and let $C$ be the leaf passing through $0$. We say that $S$ is \emph{quasi trivial} if it is trivial everywhere except along $C$.
\end{definition}
By the observations of the first section, the previously defined hypersurfaces are endowed with a holomorphic extension of the Levi foliation, except along $C$. Our aim is to use these partial extensions to define invariants (under local equivalence), and to show that the set of quasi trivial hypersurfaces contains infinitely many classes.

For this purpose, we define the following family of hypersurfaces:
$$S_n = \bigcup_t \{w= t +g_n(t)z \}$$
where
$$g_n(t) = e^{-\frac{1}{t^{2n}}}.$$
We claim that
\begin{prop}\label{many}
The hypersurfaces $S_n$ are not equivalent to each other.
\end{prop}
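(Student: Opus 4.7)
The plan is to suppose, for contradiction, that $\Phi = (P, Q) : (S_n, U_n) \to (S_m, U_m)$ is a local equivalence with $n \neq m$, derive a functional equation involving $g_n$, $g_m$ and an induced leaf-space map $\sigma$, and then compare growth rates on carefully chosen rays to force a contradiction from the strict inequality between $n$ and $m$.

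Since $g_k$ is real analytic on $\mathbb{R} \setminus \{0\}$ and extends holomorphically to $\mathbb{C}^*$ (with an essential singularity at $0$), each $S_k$ is quasi trivial: the Levi foliation extends to a holomorphic foliation of $U_k \setminus C$ (with $C = \{w = 0\}$ the leaf through $0$) whose leaves are $L^k_\tau = \{w = \tau + g_k(\tau)z\}$, parametrised by $\tau$ in a one-sided punctured neighborhood of $0 \in \mathbb{C}_w$. By the uniqueness statement in the Remark after Lemma \ref{inv}, $\Phi$ takes the extended foliation on one side to that on the other, inducing a leaf-parameter map $\sigma$ that is holomorphic and bounded on the punctured one-sided neighborhood, smooth up to $\mathbb{R}$, and coincides there with the diffeomorphism of $\mathbb{R}$ induced by $\Phi|_{S_n}$. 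Riemann's removable singularity theorem extends $\sigma$ through $0$, and Schwarz reflection across $\mathbb{R}$ extends $\sigma$ to a full neighborhood of $0$ in $\mathbb{C}$, with $\sigma(\tau) = c\tau + O(\tau^2)$ for some $c \in \mathbb{R}^*$. Expanding $\Phi(L^n_\tau) \subset L^m_{\sigma(\tau)}$ as $Q(z, \tau + g_n(\tau)z) = \sigma(\tau) + g_m(\sigma(\tau))\, P(z, \tau + g_n(\tau)z)$ and differentiating in $z$ at $z = 0$ yields the key identity
$$g_m(\sigma(\tau))\bigl[A(\tau) + B(\tau) g_n(\tau)\bigr] = C(\tau) + D(\tau) g_n(\tau), \qquad (\star)$$
where $A, B, C, D$ equal $P_z, P_w, Q_z, Q_w$ at $(0, \tau)$: holomorphic in $\tau$ on the one-sided neighborhood, bounded up to $\mathbb{R}$, and with $AD - BC \neq 0$ at $\tau = 0$ by nondegeneracy of $d\Phi(0)$.

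Assume without loss of generality $n < m$. For $\tau = re^{i\theta}$ in the upper half plane, $|g_k(\tau)| = \exp\bigl(-r^{-2k}\cos(2k\theta)\bigr)$, and, since $2m$ is even, $|g_m(\sigma(\tau))|$ has leading behaviour $\exp\bigl(-|c|^{-2m} r^{-2m}\cos(2m\theta)\bigr)$. The interval $(\pi/(4m), \pi/(4n))$ is nonempty since $n < m$; for $\theta^*$ slightly above $\pi/(4m)$ one has $\cos(2n\theta^*) > 0$ and $\cos(2m\theta^*) < 0$, so along the ray $\arg \tau = \theta^*$ we have $g_n(\tau) \to 0$ and $|g_m(\sigma(\tau))| \to +\infty$ super-exponentially. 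The right-hand side of $(\star)$ then tends to the finite value $C(0)$, while the left-hand side is unbounded: if $A \not\equiv 0$, then $A + Bg_n \sim A(\tau)$ (the $g_n$ term being super-exponentially smaller than any polynomial), whose modulus decays at most polynomially and is overwhelmed by the super-exponential growth of $|g_m(\sigma)|$; if $A \equiv 0$, the Jacobian condition forces $B(0), C(0) \neq 0$, and the left-hand side reduces to $B(\tau) g_m(\sigma) g_n$, whose logarithm is asymptotic to $|c|^{-2m} r^{-2m}|\cos(2m\theta^*)| - r^{-2n}\cos(2n\theta^*) \to +\infty$ since $n < m$. Either sub-case contradicts the finiteness of the right-hand side. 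The main obstacle is carefully handling this final dichotomy; the strict inequality $n < m$ is what ensures that the $r^{-2m}$-scale growth always dominates the $r^{-2n}$-scale decay, closing both cases simultaneously.
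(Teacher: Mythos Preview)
Your approach is genuinely different from the paper's. After establishing the induced leaf-space map, the paper proves a topological lemma (Lemma~\ref{conn}): the set $H_{n,\varepsilon}\subset H$ of parameters $\eta$ for which the line $\psi_n(\cdot,\eta)$ meets the central leaf $C=\{w=0\}$ inside $B_\varepsilon$ has exactly $n$ connected components, and this count is preserved by the leaf-space biholomorphism. You instead derive the functional identity $(\star)$ and compare asymptotics along a single ray. This is more direct and avoids the connectedness analysis entirely; both arguments rest on the same leaf-space correspondence (your $\sigma$ is the paper's $\phi$), but your analytic comparison replaces the paper's topological invariant.

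There is, however, a gap in the dichotomy. In the case $A\not\equiv 0$ you assert that $|A(\tau)|$ ``decays at most polynomially'' along the ray. But $A(\tau)=P_z(0,\tau)$ is only known to be holomorphic on the one-sided neighborhood and \emph{smooth} up to $\mathbb R$: nothing you have said rules out that $A$ vanishes to infinite order at $0$, and a bounded holomorphic function on the upper half plane that is $C^\infty$ up to the real axis can certainly do so without being identically zero. In that situation $|A(\tau)|$ may decay faster than $1/|g_m(\sigma(\tau))|$ along the chosen ray, and the left-hand side need not blow up.

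The repair is short and makes the case split unnecessary. Restrict $(\star)$ to real $\tau\to 0$: there $g_n(\tau)\to 0$ and $g_m(\sigma(\tau))\to 0$, so the left-hand side tends to $0$ while the right-hand side tends to $C(0)$; hence $C(0)=0$. The Jacobian condition $A(0)D(0)-B(0)C(0)\neq 0$ then forces $A(0)\neq 0$. Now along your ray $\arg\tau=\theta^*$ one has $A(\tau)+B(\tau)g_n(\tau)\to A(0)\neq 0$, so the left-hand side of $(\star)$ is unbounded while the right-hand side tends to $C(0)=0$, which is the desired contradiction.
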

In order to prove Proposition \ref{many}, our main idea is to exploit the fact that the partially defined foliations in a neighborhood of $S_n$ \lq\lq extend\rq\rq, albeit in a singular way, to the whole $U_n$. These extensions are invariants for local equivalence, but have different topological behaviours for different $n$'s.
\begin{lemma}\label{smoke}
Let $(S_1,U_1)$ and $(S_2,U_2)$ two Levi flat germs of hypersurfaces, and let $\mathcal H = \mathbb C \times H = \{(\zeta,\eta)\in \mathbb C^2: {\rm Im}\ \eta>0 \}$. Let $\mathcal U$ be a neighborhood of $0$ in $\mathbb C^2$, and let  $\psi_j:\mathcal H\cap \mathcal U\to U_j$, $j=1,2$, be holomorphic maps satisfying
\begin{itemize}
\item $\psi_j$ is a local biholomorphism;
\item there exist $p_j\in \{{\rm Im}\ \eta=0\}$ and neighborhoods $\mathcal V_j$ of $p_j$ in $\mathbb C^2$ such that $\psi_j|_{\mathcal V_j\cap \mathcal H}$ extends smoothly up to the boundary.
\end{itemize}
Suppose that $S_1$ and $S_2$ are locally equivalent by $\Phi:U_1\to U_2$, and suppose that $\Phi(\psi_1(\mathcal V_1))\cap \psi_2(\mathcal V_2)\neq \emptyset$. Then for any $c_1\in \mathbb C$ with $|c_1|<\varepsilon$ and ${\rm Im}\ c_1>0$ there exists $c_2\in \mathbb C$ such that
$\Phi(\psi_1(\{\eta = c_1\}))\subset \psi_2(\{\eta= c_2\})$.
\end{lemma}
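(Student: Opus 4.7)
The plan is to study the composition $G := \psi_2^{-1}\circ\Phi\circ\psi_1$ on a suitable open subset of $\mathcal{H}\cap\mathcal{U}$. Writing $G=(G_\zeta,G_\eta)$, my goal is to show that the second component $G_\eta$ depends only on $\eta$; then $G$ automatically sends each slice $\{\eta = c_1\}$ into $\{\eta = G_\eta(c_1)\}$, and setting $c_2 := G_\eta(c_1)$ yields the desired inclusion.

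First I would set up the domains so that $G$ extends to a real boundary piece. Using the non-empty intersection hypothesis together with the smooth boundary extension of $\psi_1$, $\psi_2$, and $\Phi$, after possibly shrinking $\mathcal{V}_1, \mathcal{V}_2$ and adjusting the base points along $\{\mathrm{Im}\,\eta=0\}$, one can arrange that $G$ is well-defined on a one-sided neighborhood $\Omega\subset\mathcal{V}_1\cap\mathcal{H}$ of a non-empty open piece $\lambda\subset\{\mathrm{Im}\,\eta=0\}$ and extends smoothly up to $\lambda$. Here one uses that $\psi_2$ is a local biholomorphism whose Jacobian remains non-vanishing on the smooth extension to a neighborhood of $p_2$ on the real boundary, so $\psi_2^{-1}$ itself extends smoothly to the relevant piece of $S_2$.

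Next I would show that $\partial_\zeta G_\eta\equiv 0$ on $\lambda$. The smooth extension of $\psi_j$ sends the real slice $\{\eta=t\}$ onto a piece of a Levi leaf of $S_j$, and $\Phi$ sends Levi leaves of $S_1$ into Levi leaves of $S_2$. Hence, for each sufficiently small real $t$, $\{\eta=t\}\cap\lambda$ is carried by $G$ into a single slice $\{\eta=\rho(t)\}$ with $\rho(t)\in\mathbb{R}$; in other words, $G_\eta(\zeta,t)$ is independent of $\zeta$ along $\lambda$, which is precisely the vanishing of $\partial_\zeta G_\eta$ on $\lambda$. The function $\partial_\zeta G_\eta$ is then holomorphic on $\Omega$, smooth up to $\lambda$, and identically zero on $\lambda$. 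Fixing $\zeta$, the one-variable holomorphic function $\eta\mapsto \partial_\zeta G_\eta(\zeta,\eta)$ on an upper half-disc vanishes on the real-axis boundary to which it extends smoothly, hence by Schwarz reflection it is identically zero. Therefore $G_\eta$ is a function of $\eta$ alone on $\Omega$, so $G$ sends each slice $\{\eta=c\}\cap\Omega$ into $\{\eta=G_\eta(c)\}$.

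Finally, to extend the conclusion to every $c_1\in\mathbb{C}$ with $|c_1|<\varepsilon$ and $\mathrm{Im}\,c_1>0$ (not merely to those with $\{\eta=c_1\}\cap\Omega\neq\emptyset$), I would appeal to the uniqueness statement in the remark following Lemma~\ref{inv}: both $\Phi_*\mathcal{F}_1$ (the push-forward under $\Phi$ of the foliation of $\psi_1(\mathcal{V}_1\cap\mathcal{H})$ by $\{\eta=c\}$) and $\mathcal{F}_2$ (the analogous foliation from $\psi_2$) are holomorphic foliations of open subsets of $U_2$ that extend smoothly up to the relevant pieces of $S_2$ and restrict there to the Levi foliation; the remark forces them to coincide on their common domain, which identifies $\Phi(\psi_1(\{\eta=c_1\}))$ with a leaf of $\mathcal{F}_2$, namely $\psi_2(\{\eta=c_2\})$ for some $c_2$. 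The principal difficulty I expect is the geometric bookkeeping in the first step, i.e.\ combining the interior overlap hypothesis with the separate smooth boundary extensions so as to obtain a single $\lambda\subset\{\mathrm{Im}\,\eta=0\}$ on which $G$ is defined and smooth; once this reduction is achieved, the vanishing of $\partial_\zeta G_\eta$ on $\lambda$ and the subsequent Schwarz-reflection and uniqueness-of-foliation arguments are essentially routine.
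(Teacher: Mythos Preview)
Your approach is correct in outline and close in spirit to the paper's, but organized differently. Both proofs study the composition $\psi_2^{-1}\circ\Phi\circ\psi_1$ and reduce the claim to showing its second component is independent of $\zeta$; the difference lies in how this is established near the boundary and then propagated.

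Near the boundary, you argue directly from the fact that $\Phi|_{S_1}$ preserves the Levi foliation, so $G_\eta(\zeta,t)$ is constant in $\zeta$ for real $t$, and then apply Schwarz reflection to $\partial_\zeta G_\eta$. The paper instead invokes the uniqueness remark after Lemma~\ref{inv} immediately to see that, for $c_1$ close to the real axis, $\Phi$ carries $\psi_1(\{\eta=c_1\})$ into some $\psi_2(\{\eta=c_2\})$; this is the content of showing the set $V$ of good parameters is non-empty. Your route here is a pleasant, self-contained alternative.

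For the propagation to all $c_1$ with $|c_1|<\varepsilon$, the paper runs a clean open--closed argument on $\mathcal U\cap H$: at a cluster point of $V$ one again forms a local branch of $\psi_2^{-1}\circ\Phi\circ\psi_1$ and observes that $\partial\beta/\partial\zeta$ vanishes on an open subset, hence everywhere, which pushes $c_1$ into $V$. Your final paragraph tries to replace this by another appeal to the uniqueness remark, asserting that $\Phi_*\mathcal F_1$ and $\mathcal F_2$ ``coincide on their common domain''. As stated this is not quite what the remark gives: the remark compares two foliations that \emph{both} extend smoothly to the same boundary piece of $S$, so it only yields agreement on a one-sided neighborhood of that piece. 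Passing from there to the entire common domain still requires an analytic continuation/connectedness argument, and you also need to know that the image of $\psi_1(\{\eta=c_1\})$ actually meets that domain for every $c_1$ in question (recall $p_1$ need not be the origin). These are exactly the points the paper's open--closed argument is designed to handle; once you add that step your proof is complete and essentially equivalent to the paper's.
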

\begin{proof}
Let $V$ be the set of the $c_1$  for which there exists $\varepsilon > 0$ such that the statement of the lemma is valid in $D(c_1,\varepsilon)$. $V$ is open by definition: we want to show that it is a non-empty, closed subset of $\mathcal U\cap  H$.

First, choose $(\zeta_1,c_1)\in \mathcal V_1\cap \mathcal H$. By uniqueness of the extension (see the remark after Lemma \ref{inv}) we have that $\Phi(\psi_1(\{\eta=c_1\}\cap \mathcal V_1))\subset \psi_2(\{\eta= c_2\})$ for some $c_2$; by analytic continuation, this holds for the whole image of $\psi_1(\{\eta=c_1\})$, hence $V$ is non-empty.

To show that $V$ is closed, let $c_1$ be a cluster point for $V$ and pick $\zeta_1$ such that $p_1= (\zeta_1,c_1)\in \mathcal U\cap \mathcal H$. Let $q_1=\psi_1(p_1)$ and $q_2=\Phi(q_1)$. Let $\{p_1^n= (\zeta_1^n, c_1^n)\}_{n\in \mathbb N}$ be a sequence such that $c_1^n\in V$ and $p_1^n \to p_1$, and define the corresponding sequences $\{q_1^n\}$, $\{q_2^n\}$. Then $c_2\in \mathbb C$ is uniquely defined in such a way that $\psi_2^{-1}(\Phi(\psi_1(\{\eta = c_1^n\})))\to \{\eta=c_2\}$. Choose $p_2=(\zeta_2,c_2)$ such that $\psi_2(p_2)=q_2$; by hypothesis, there is a local inverse $\psi_2^{-1}$ in a neighborhood of $q_2$ such that $\psi_2^{-1}(q_2)=p_2$. Then $\Psi:=\psi_2^{-1}\circ\Phi\circ\psi_1$ is a well defined biholomorphism between a neighborhood $W$ of $p_1$ and a neighborhood of $p_2$. Let $\Psi(\zeta,\eta) = (\alpha(\zeta,\eta), \beta(\zeta,\eta))$; then, by construction, there exists an open subset $W'\subset W$ such that
$$ \frac{\partial \beta}{\partial \zeta}(\zeta, \eta)\equiv 0\ {\rm for}\  (\zeta,\eta)\in W'.$$
It follows that $\partial \beta/\partial \zeta \equiv 0$ on $W$, i.e.  $\Phi(\psi_1(\{\eta=c_1' \}\cap W))\subset \psi_2(\{\eta= c_2'\})$ for $c_1'$ in a neighborhood of $c_1$. As before, by analytic continuation we have the same for the whole $\psi_1(\{\eta=c_1'\})$, hence $c_1\in V$.
\end{proof}
\begin{remark}
If we assume that $\psi_j(\{\eta= c_j\})\neq \psi_j(\{\eta= c_j'\})$ for $c_j\neq c_j'$, $j=1,2$, then the previous lemma gives in fact an invertible map $c_1\to c_2(c_1)$ defined in a neighborhood of $0$ in $H=\{{\rm Im}\ \eta >0\}$. It is not difficult to see that in this case the map is in fact a biholomorphism.
\end{remark}
\begin{proof}(Proposition \ref{many})  With the notations of Lemma \ref{smoke}, let $\psi_n:\mathcal H\to U_n$ be defined as
$$\psi_n(\zeta,\eta) = (\zeta,  \eta +g_n(\eta)\zeta)$$
where
$$g_n(\eta) = e^{-\frac{1}{\eta^{2n}}}$$
is well defined for ${\rm Im}\ \eta >0$. The images of $\{\eta = c\}$ by the map $\psi_n$ extend the foliation of $S_n$ to a foliation of a neighborhood of $S_n\setminus \{w=0\}$ in $U_n$; in the whole of $U_n$, they can be simply regarded as a holomorphic $1$-parameter family of complex lines. Let $\Phi$ be a local equivalence between $S_{n_1}$ and $S_{n_2}$. By Lemma \ref{smoke}, $\Phi$ sends the respective $1$-parameter families one into another, i.e. there exists a biholomorphism (in a neighborhood of $0$) $\phi$ such that the diagram
  $$\xymatrix{
  H \ar@{->}[rr]^{ \widetilde \psi_{n_1}} \ar@{<->}[dd]_{ \phi}
      && \widetilde U_{n_1} \ar@{<->}[dd]^{ \Phi}    \\ \\
 H \ar@{->}[rr]_{\widetilde \psi_{n_2}} && \widetilde U_{n_2}
 }$$
commutes, where we denote by $\widetilde U_{n}$ the space of the complex lines in $U_n$ and by $\widetilde \psi_{n}$ the map $c\to \widetilde \psi_n(c) = \psi_n(\{\eta = c\})$.
Now, consider the  \lq\lq central\rq\rq\ leaf $C=\{w=0\}$; we are interested in the behaviour of the families $\widetilde \psi_{n_j}(H)$ with respect to $C$. In particular, let $B_\varepsilon$ be an $\varepsilon$-neighborhood of $0$ in $C$, and consider
$$H_{n,\varepsilon}= \{c_n \in H: \widetilde\psi_n(c)\cap C\in B_\varepsilon\}.$$ 
The following lemma shows that the previously defined sets are not homeomorphic for different $n$'s:
\begin{lemma}\label{conn}
$H_{n,\varepsilon}$ has $n$ connected components for small enough $\varepsilon$. 
\end{lemma}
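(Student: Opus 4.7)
The plan is to reduce the topological problem to a real two-variable sublevel-set question. First I would compute the intersection explicitly: since $\widetilde\psi_n(c) = \{(\zeta, c + g_n(c)\zeta):\zeta\in\mathbb C\}$ and $C = \{w=0\}$, one has $\widetilde\psi_n(c)\cap C = \{(-c/g_n(c), 0)\}$. Identifying $B_\varepsilon\subset C$ with the disk $\{|\zeta|<\varepsilon\}$, and writing $c=re^{i\theta}$ in polar coordinates so that $|g_n(c)| = \exp(-r^{-2n}\cos(2n\theta))$, the condition defining $H_{n,\varepsilon}$ becomes
\[
F_n(r,\theta) \;:=\; r\exp\!\bigl(r^{-2n}\cos(2n\theta)\bigr) \;<\; \varepsilon.
\]

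Next I would partition the angular direction $\theta\in(0,\pi)$ into the $n$ ``good'' open intervals
\[
G_k = \Bigl(\tfrac{(4k+1)\pi}{4n},\tfrac{(4k+3)\pi}{4n}\Bigr), \qquad k=0,\dots,n-1,
\]
where $\cos(2n\theta)<0$, together with the complementary ``bad'' intervals where $\cos(2n\theta)\ge 0$. The key elementary observation is: for $\theta$ fixed in some $G_k$, the function $r\mapsto F_n(r,\theta)$ is strictly increasing from $0$ to $+\infty$, so its sublevel on the ray is an interval $(0,r_+(\theta))$; whereas for $\theta$ in a bad sector, $r\mapsto F_n(r,\theta)$ attains a unique minimum at $r^\ast(\theta)=(2n\cos(2n\theta))^{1/(2n)}$ of value $(2ne\cos(2n\theta))^{1/(2n)}$.

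For the lower bound of at least $n$ components I would invoke the $n-1$ rays $\theta=k\pi/n$, $k=1,\dots,n-1$, which lie in the middles of the interior bad sectors and where $\cos(2n\theta)=1$. On such a ray the minimum of $F_n$ equals the fixed positive number $(2ne)^{1/(2n)}$, so for $\varepsilon<(2ne)^{1/(2n)}$ these rays are entirely disjoint from $H_{n,\varepsilon}$, separating it into at least $n$ pieces. For the upper bound I would show that each ``lobe'' of $H_{n,\varepsilon}$ around a good sector $G_k$ is connected: inside $G_k$ it is the star-shaped horn $\{re^{i\theta}:r<r_+(\theta)\}$, which extends continuously across the good-bad boundary as a ``thread'' $(r_-(\theta),r_+(\theta))$ pinching off at the critical angle $\theta_c$ defined by $\cos(2n\theta_c)=\varepsilon^{2n}/(2ne)$.

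The delicate step is confirming that, for $\varepsilon$ small, the critical angles $\theta_c$ on each side of a given interior bad sector stay strictly on the $G_k$-side of the middle ray $\theta=k\pi/n$, so that the two threads flanking that bad sector never meet; this reduces to the elementary inequality $\varepsilon^{2n}/(2ne)<1$, automatic once $\varepsilon<(2ne)^{1/(2n)}$. Hence a single smallness condition on $\varepsilon$, depending only on $n$, yields exactly $n$ connected components of $H_{n,\varepsilon}$, as required.
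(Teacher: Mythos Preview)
Your proposal is correct and follows essentially the same route as the paper: both reduce to the polar-coordinate description $F_n(r,\theta)=r\exp(r^{-2n}\cos(2n\theta))<\varepsilon$, analyze the one-variable function $r\mapsto F_n(r,\theta)$ according to the sign of $\cos(2n\theta)$, use the rays $\theta=k\pi/n$ (where the minimum value equals $(2ne)^{1/(2n)}$) to separate the set into at least $n$ pieces once $\varepsilon<(2ne)^{1/(2n)}$, and then argue that each piece between consecutive such rays is connected because the ray-slices are intervals with continuously varying endpoints. Your treatment of the connectedness step---describing the star-shaped ``horn'' over $G_k$ and the ``threads'' extending into the bad sectors up to the pinch-off angle $\theta_c$---is in fact more explicit than the paper's, which simply observes that the slice endpoints are continuous in $\theta$ and leaves the conclusion to the reader.
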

\begin{proof}
Observe that the set $H_{n,\varepsilon}$ is explicitly given by
$$H_{n,\varepsilon}=\{\eta\in \mathbb C: {\rm Im}\ \eta>0, |\eta| e^{{\rm Re}\ \frac{1}{\eta^{2n} } }<\varepsilon\}.$$
Note also that we are interested in the local behaviour of $H_{n,\varepsilon}$ around $0$, so we will consider its intersection with $\{|\eta|<\delta\}$ for some small, fixed $\delta>0$. It is convenient to choose polar coordinates for $\eta$, $\eta=t e^{i\theta}$ for $t\in \mathbb R^+$ and $\theta\in [0,2\pi)$. With such a choice we have
$$\left | \frac{1}{\eta^{2n}}\right | = \frac{1}{t^{2n}},\ \ \arg{\left( \frac{1}{\eta^{2n}}\right )} = -2n\theta, $$
hence we can express $H_{n,\varepsilon}\cap \{|\eta|<\delta\}=D_{\varepsilon,\delta}$ as
$$D_{\varepsilon,\delta}=\{(t,\theta)\in \mathbb R^+ \times (0,\pi): t<\delta,\  t e^{\frac{\cos (2n\theta)}{t^{2n}}  } <\varepsilon \}.$$
By a direct inspection of the function $f_h(t) = t\exp(h/t^{2n})$, for a fixed $0<h\leq1$, one can see that $f_h(t)\to +\infty$ fot $t\to 0^+$ and $t \to +\infty$; moreover, it assumes minimum in $t(h) = (2nh)^{1/2n}$, and it is monotone on $\{0<t<t(h)\}$ and $\{t>t(h)\}$. Note that $t(h)\to 0$ for $h\to 0^+$. On the other hand, if $-1\leq h<0$ we have that $f_h(t)\to 0$ for $t\to 0^+$ and $f_h(t)\to +\infty$ for $t\to +\infty$; moreover, the function is monotone on $\mathbb R^+$.

We consider now the intersection of $D_{\varepsilon,\delta}$ with the half-lines $\{\theta=const\}$. If $\alpha_k = \pi/4n + (k/n)\pi$, $k=0,\ldots, n-1$, we have $\cos(2n\alpha_k) = 0$ and thus $D_{\varepsilon,\delta}\cap\{\theta=\alpha_k\} = \{t<\varepsilon\}$ (for $\varepsilon\leq \delta$). Exactly the same holds for $\{\theta=\beta_k\}$, where $\beta_k = 3\pi/4n + (k/n)\pi$, $k=0,\dots,n-1$. If $\alpha_k<\gamma<\beta_k$, we have that $\cos(2n\gamma)<0$, therefore $f_{\cos(2n\gamma)}(t)<t$ and $D_{\varepsilon,\delta}\cap\{\theta=\gamma\}$ is an interval containing $\{t<\varepsilon\}$.

When, instead, $\gamma\not\in (\alpha_k,\beta_k)$, computing $f_{\cos(2n\gamma)}(t)$ in $t=\varepsilon$ we see that $$f_{\cos(2n\gamma)}(\varepsilon) = \varepsilon e^{\frac{\cos(2n\gamma)}{\varepsilon^{2n}}} > \varepsilon$$
hence $D_{\varepsilon,\delta}\cap \{\theta = \gamma\}$ is an interval contained in $\{t<\varepsilon\}$ (whose extremes are continuous in $\gamma$).  Moreover, if we choose $\varepsilon < \sqrt[2n]{2n} \exp(1/2n) = f_1(t(1))$, it follows that $D_{\varepsilon,\delta}\cap \{\theta = \mu_k\} = \emptyset$ when $\mu_k = (k/n)\pi$, $k=0,\ldots,n-1$.  

From this last observation we derive that $D_{\varepsilon,\delta}\cap\{\theta > \mu_k\}$ is disconnected from $D_{\varepsilon,\delta}\cap \{\theta <\mu_k\}$ for $k=0,\ldots,n-1$, and the description provided in the previous paragraphs shows that $D_{\varepsilon,\delta}\cap \{\mu_k < \theta < \mu_{k+1}\}$ is connected. Hence $D_{\varepsilon,\delta}$ is made up by $n$ connected components $D_0,\ldots,D_{n-1}$, such that $D_k$ is contained in the sector $\{\mu_k<\theta<\mu_{k+1}\}$.
\end{proof} 
Since $\Phi$ is smooth up to the boundary and (since we are assuming $\Phi(0)=0$) sends $C$ into $C$, we have that for each $\varepsilon>0$ there exists $\delta$ such that $\phi(H_{n_1,\varepsilon})\subset H_{n_2,\delta}$ (and for each $\delta'>0$ there exists $\varepsilon'$ such that $\phi^{-1}(H_{n_2,\delta'})\subset H_{n_1,\varepsilon'}$). If $n_1<n_2$, Lemma \ref{conn} implies that for some connected component $H'$ of  $H_{n_2,\delta}$ we have $\phi^{-1}(H')\cap H_{n_1,\varepsilon}=\emptyset$. In particular, for all $\varepsilon'<\varepsilon$ we have $\phi^{-1}(H_{n_2,\delta})\not\subset H_{n_1,\varepsilon'}$, which is a contradiction.
\end{proof}

\begin{bibdiv}
\begin{biblist}

\bib{AF}{article}{
   author={Andreotti, Aldo},
   author={Fredricks, Gregory A.},
   title={Embeddability of real analytic Cauchy-Riemann manifolds},
   journal={Ann. Scuola Norm. Sup. Pisa Cl. Sci. (4)},
   volume={6},
   date={1979},
   number={2},
   pages={285--304},
   review={\MR{541450 (80h:32019)}},
}

\bib{BER}{book}{
   author={Baouendi, M. Salah},
   author={Ebenfelt, Peter},
   author={Rothschild, Linda Preiss},
   title={Real submanifolds in complex space and their mappings},
   series={Princeton Mathematical Series},
   volume={47},
   publisher={Princeton University Press},
   place={Princeton, NJ},
   date={1999},
   pages={xii+404},
   isbn={0-691-00498-6},
   review={\MR{1668103 (2000b:32066)}},
}

\bib{Be}{book}{
   author={Bell, Steven R.},
   title={The Cauchy transform, potential theory, and conformal mapping},
   series={Studies in Advanced Mathematics},
   publisher={CRC Press},
   place={Boca Raton, FL},
   date={1992},
   pages={x+149},
   isbn={0-8493-8270-X},
   review={\MR{1228442 (94k:30013)}},
}

\bib{BL}{article}{
   author={Bell, Steve},
   author={Lempert, L{\'a}szl{\'o}},
   title={A $C^\infty$ Schwarz reflection principle in one and several
   complex variables},
   journal={J. Differential Geom.},
   volume={32},
   date={1990},
   number={3},
   pages={899--915},
   issn={0022-040X},
   review={\MR{1078165 (91k:32017)}},
}

\bib{Ca1}{article}{
author={Cartan, {\'E}lie},
title={Sur la g\'eom\'etrie pseudo-conforme des hypersurfaces de l'espace de
    deux variables complexes I},
language={French},
journal={Ann. Mat. Pura Appl., IV. Ser. },
volume={11},
pages={17-90},
year={1932},
}

\bib{Ca2}{article}{
   author={Cartan, {\'E}lie},
   title={Sur la g\'eom\'etrie pseudo-conforme des hypersurfaces de l'espace
   de deux variables complexes II},
   language={French},
   journal={Ann. Scuola Norm. Sup. Pisa Cl. Sci. (2)},
   volume={1},
   date={1932},
   number={4},
   pages={333--354},
   issn={0391-173X},
   review={\MR{1556687}},
}

\bib{CM}{article}{
   author={Chern, S. S.},
   author={Moser, J. K.},
   title={Real hypersurfaces in complex manifolds},
   journal={Acta Math.},
   volume={133},
   date={1974},
   pages={219--271},
   issn={0001-5962},
   review={\MR{0425155 (54 \#13112)}},
}

\bib{HL}{article}{
   author={Harvey, F. Reese},
   author={Lawson, H. Blaine, Jr.},
   title={On boundaries of complex analytic varieties. I},
   journal={Ann. of Math. (2)},
   volume={102},
   date={1975},
   number={2},
   pages={223--290},
   issn={0003-486X},
   review={\MR{0425173 (54 \#13130)}},
}

\bib{Ko}{article}{
   author={Kol{\'a}{\v{r}}, Martin},
   title={Normal forms for hypersurfaces of finite type in ${\Bbb C}^2$},
   journal={Math. Res. Lett.},
   volume={12},
   date={2005},
   number={5-6},
   pages={897--910},
   issn={1073-2780},
   review={\MR{2189248 (2007d:32034)}},
}

\bib{Po}{article}{
author={Henri Poincar\'e}
title={Les fonctions analytiques de deux variables et la repr\'esentation conforme}
journal={Rend. Circ. Mat. Palermo}
series={2}
volume={23}
pages={185-220}
date={1907}
}

\bib{Ta}{article}{
   author={Tanaka, Noboru},
   title={On the pseudo-conformal geometry of hypersurfaces of the space of
   $n$\ complex variables},
   journal={J. Math. Soc. Japan},
   volume={14},
   date={1962},
   pages={397--429},
   issn={0025-5645},
   review={\MR{0145555 (26 \#3086)}},
}

\end{biblist}
\end{bibdiv}

\end{document}